\documentclass[11pt,leqno]{amsart}
\usepackage[mathcal]{eucal}
\usepackage[utf8]{inputenc}
\usepackage{amsfonts,amssymb}
\usepackage{hyperref}
\usepackage{graphicx}
\usepackage{enumerate,float}
\usepackage{caption}
\usepackage{xfrac}

\usepackage{tikz}
\tikzset{every picture/.style={line width=0.75pt}} 


\addtolength{\hoffset}{-.8cm}
\addtolength{\textwidth}{1.6cm}
\addtolength{\voffset}{-1.5cm}
\addtolength{\textheight}{1.6cm}
\addtolength{\footskip}{.5cm}

\pagestyle{plain}
\numberwithin{equation}{section}


\theoremstyle{plain}
\newtheorem{lemma}{Lemma}[section]
\newtheorem{proposition}[lemma]{Proposition}
\newtheorem{theorem}[lemma]{Theorem}

\theoremstyle{remark}
\newtheorem{remark}[lemma]{Remark}
\newtheorem*{notation*}{Notation}

\theoremstyle{definition}
%

%


\def\U{\mathcal{U}}
\def\R{\mathbb{R}}
\def\Z{\mathbb{Z}}
\def\GG{\mathfrak{S}}
\def\SS{\mathfrak{S}}
\def\La{\mathcal{R}}
\def\L{\mathfrak{L}}

\def\x{\mathbf{x}}
\def\I{\mathfrak{I}}
\def\XX{\mathfrak{X}}
\def\KK{\mathfrak{K}}
\def\MM{\mathfrak{N}}
\def\UU{\mathfrak{U}}
\def\0{\mathbf{0}}
\def\NN{\mathfrak{N}}
\def\N{\mathbb{N}}
\def\K{K}


\DeclareMathOperator{\End}{End}
\DeclareMathOperator{\rk}{rk}
\DeclareMathOperator{\ad}{ad}
\DeclareMathOperator{\Der}{Der}
\DeclareMathOperator{\Ad}{Ad}
\DeclareMathOperator{\charac}{char}
\DeclareMathOperator{\Mat}{M}


\makeatletter
\def\moverlay{\mathpalette\mov@rlay}
\def\mov@rlay#1#2{\leavevmode\vtop{%
   \baselineskip\z@skip \lineskiplimit-\maxdimen
   \ialign{\hfil$\m@th#1##$\hfil\cr#2\crcr}}}
\newcommand{\charfusion}[3][\mathord]{
    #1{\ifx#1\mathop\vphantom{#2}\fi
        \mathpalette\mov@rlay{#2\cr#3}
      }
    \ifx#1\mathop\expandafter\displaylimits\fi}
\makeatother

\makeatletter
\def\author@andify{%
  \nxandlist {\unskip ,\penalty-1 \space\ignorespaces}%
    {\unskip {} \@@and~}%
    {\unskip \penalty-2 \space \@@and~}%
}
\makeatother
%


\title[Ado's Theorem for PIDs]{A remark on Ado's Theorem for principal ideal domains}

\author[A. Zozaya]{Andoni Zozaya} 
\address{Andoni Zozaya: Department of Mathematics, University
  of the Basque Coun\-try UPV/EHU, 48940 Leioa, Spain}
\email{andoni.zozaya@ehu.eus}
\date{}

\makeatletter
\@namedef{subjclassname@2020}{\textup{2020} Mathematics Subject
  Classification}
\makeatother
\subjclass[2020]{17B10, 17B30, 17B35}
\keywords{Ado's Theorem, Lie algebras, representations, principal ideal domains}
\thanks{The author acknowledges support by the Basque Government, project IT483-22, and the Spanish Government, project PID2020-117281GB-I00, partly with ERDF funds.}


\begin{document}

\maketitle

\begin{abstract}
Ado's Theorem had been extended to principal ideal domains independently by Churkin and Weigel. They demonstrated  that if $R$ is a principal ideal domain of characteristic zero and $\L$ is a Lie algebra over $R$ which is also a free $R$-module of finite rank, then $\L$ admits a finite faithful Lie algebra representation over $R$. \\
\smallskip

We present a quantitative proof of this result, providing explicit bounds on the degree of the Lie algebra representations in terms of the rank of the free module. To achieve it, we generalise an established  embedding theorem for complex Lie algebras: any Lie algebra as above embeds within a larger Lie algebra that decomposes as the direct sum of its nilpotent radical and another subalgebra.
\end{abstract}


\section{Introduction}

Ado's Theorem \cite{Ado} states that every finite dimensional Lie algebra $\L$ over a field $K$ of characteristic zero admits a finite faithful Lie algebra representation, that is, there exists a finite dimensional $K$-vector space $V$ and a $K$-Lie algebra monomorphism $\Phi \colon \L \hookrightarrow \End_K(V).$ Obviously, this is equivalent to the existence of a finite matricial representation $\tilde{\Phi} \colon \L \hookrightarrow \Mat_n(K),$ and the integer $n = \dim_K{V}$ is called the degree of the representation.

From most of the proofs of Ado’s Theorem it follows that $\deg \Phi$, the degree of the representation $\Phi,$ is bounded in terms of $\dim_K \L,$ the $K$-vector space dimension of $\L.$ That is to say, if we define the degree of a Lie algebra $\L$ as
\begin{equation}
\label{eq: define-degree}
\deg \L := \min \left\{ \deg \Phi \mid \Phi \textrm{ faithful Lie algebra representation of } \L \right\},
\end{equation}
then the integer $\deg{\L}$ is bounded only in terms of $\dim_K\L$.

Some quantitative bounds are known for $\deg \L,$ specially when $\L$ is nilpotent. For instance,  if $\L$ is a nilpotent Lie algebra of dimension $d$ and nilpotency class $c$,  according to \cite[Corollary to Theorem 4]{Birk},  $\deg \L \leq \frac{d^{c+1} -1}{d-1}$ or according to \cite[Corollary 5.1]{Gra},  $\deg{\L} \leq {{d+c} \choose{c}}.$ Remarkably, if the nilpotency class is fixed, $\deg \L$ is polynomial in $\dim_K \L$. In general, for a nilpotent $d$-dimensional $K$-Lie algebra $\L$, Burde \cite{Burde} proved that 
$$\deg \L \leq \eta \frac{2^d}{\sqrt{d}},$$
where $\eta \sim 2.763$.

Iwasawa \cite{Iwa} extended Ado's Theorem to Lie algebras over fields of positive characteristic, and there are further generalizations in the base ring. Following the terminology of \cite{Wei}, for a general ring $R$ we denote by $R$-Lie lattice an $R$-Lie algebra that is a free $R$-module of finite rank as well. Actually, any $R$-Lie algebra that admits a finite matricial representation is indeed an $R$-Lie lattice. Conversely, suppose that $R$ is a principal ideal domain (PID) of characteristic zero or a general ring of positive characteristic, Churkin \cite{Chur} and Weigel \cite{Wei} proved that every $R$-Lie lattice $\L$  admits a finite faithful $R$-Lie algebra representation $\Phi \colon \L \hookrightarrow \End_R(V),$ where $V$ is a free $R$-module of finite rank. Like for fields, the degree of the preceding representation $\Phi$ is defined to be $\rk_R V,$ the rank of $V$ as a free $R$-module,  and the degree of an $R$-Lie lattice is defined exactly as in (\ref{eq: define-degree}). 

Suppose that $R$ is a PID of characteristic zero. Both \cite{Chur} and \cite{Wei} follow Jacobson’s proof of the Theorem of Ado (see \cite[Chapter VI]{Jacobson}) ---which in its turn, is based on a proof due by Harish-Chandra \cite{HC}---, but, unlike for fields, it cannot be directly affirmed that $\deg \L$ depends uniquely on $\rk_R \L.$ In fact, in  \cite[Proposition 3.4]{Wei}, the degree-to-be is finite because $R$ is a Noetherian ring, and so a particular ascending chain of ideals must be stationary. However, the length of the chain ---which eventually will be the degree of the representation---, might not be bounded in terms of $\rk_R \L$.

In this note, we collect several existing proofs of Ado’s Theorem, and by adapting them to PIDs we prove the following:

\begin{theorem}
    \label{thm: Ado}
    Let $R$ be a PID of characteristic zero and let $\L$ be an $R$-Lie lattice of rank $r.$ Then,
    $\deg \L \leq r +  \eta \frac{2^r}{\sqrt{r}},$
where $\eta \sim 2.763.$
\end{theorem}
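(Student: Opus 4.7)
The plan is to decompose the problem into a nilpotent part and a reductive part, using the embedding theorem announced in the abstract, and then to combine the two representations additively.

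First, I apply the generalised embedding theorem to produce an $R$-Lie lattice $\tilde{\L}$ together with an inclusion $\L \hookrightarrow \tilde{\L}$ and a decomposition of Lie algebras $\tilde{\L} = \NN \oplus \mathfrak{h}$, where $\NN$ is the nilpotent radical of $\tilde{\L}$ and $\mathfrak{h}$ is a complementary subalgebra. The critical quantitative requirement is that the construction can be arranged so that both $\rk_R \NN \leq r$ and $\rk_R \mathfrak{h} \leq r$, inherited from $\rk_R \L = r$. Over $\mathbb{C}$ such an embedding is classical; the PID version demands that freeness of the modules be preserved at each step and that the rank of $\tilde{\L}$ be controlled effectively rather than via an abstract Noetherianity argument.

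Next, I represent the two summands separately. Since $\NN$ is a nilpotent $R$-Lie lattice of rank at most $r$, the PID analogue of Burde's estimate yields a faithful representation $\Phi_\NN \colon \NN \hookrightarrow \End_R(V_\NN)$ with $\rk_R V_\NN \leq \eta\, 2^r/\sqrt{r}$. For the summand $\mathfrak{h}$ the direct-sum structure forces reductivity: any nilpotent ideal of $\mathfrak{h}$, being centralised by $\NN$, would lie in the nilpotent radical of $\tilde{\L}$, hence in $\NN \cap \mathfrak{h} = 0$. A reductive $R$-Lie lattice of rank $r'$ admits a faithful representation of degree at most $r'$, obtained by combining the adjoint action of its derived subalgebra with a diagonal embedding of its centre in matrices of the appropriate size. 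Applied to $\mathfrak{h}$ this produces $\Phi_{\mathfrak{h}} \colon \mathfrak{h} \hookrightarrow \End_R(V_{\mathfrak{h}})$ with $\rk_R V_{\mathfrak{h}} \leq r$.

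Because $\tilde{\L} = \NN \oplus \mathfrak{h}$ is a direct sum of \emph{commuting} Lie subalgebras, the external direct sum $\Phi_\NN \oplus \Phi_{\mathfrak{h}}$ is automatically a Lie algebra homomorphism on $\tilde{\L}$, and it is faithful since each factor is. Restricting this representation along $\L \hookrightarrow \tilde{\L}$ yields a faithful representation of $\L$ of degree at most $r + \eta\, 2^r/\sqrt{r}$, which is the required bound.

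The principal obstacle is the first step: producing a PID version of the embedding theorem with \emph{quantitative} control on the ranks of the two summands. All subsequent steps are essentially bookkeeping on top of Burde's bound and the standard reductive construction, but the embedding must genuinely place the semisimple-like part of $\L$ in a subalgebra that commutes with the nilradical of $\tilde{\L}$, and must do so without enlarging either summand beyond rank $r$; this is where the paper's main technical effort should be concentrated.
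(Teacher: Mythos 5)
Your overall architecture (embed into a lattice that splits over its nilpotent radical, represent the nilpotent part via Burde's bound, add a rank-$r$ piece for the complement) matches the paper's, but there is a genuine gap at the centre of the argument: you read the decomposition $\bar\L = R_n(\bar\L) \rtimes \SS$ as a \emph{direct sum of commuting ideals} $\NN \oplus \mathfrak{h}$. The embedding theorem (Theorem \ref{thm: embedding}) only provides a semidirect product: $\bar\L$ is the direct sum of $R_n(\bar\L)$ and $\SS$ \emph{as $R$-modules}, and $\SS$ acts on $R_n(\bar\L)$ by nontrivial derivations. Indeed it must: already for the two-dimensional solvable lattice $\langle x,y\rangle$ with $[x,y]=y$, any embedding into $\NN\oplus\mathfrak{h}$ with $[\NN,\mathfrak{h}]=0$ and $\NN$ nilpotent forces $y$ into $\mathfrak{h}$ (since $\ad$ of a nilpotent element cannot have eigenvalue $1$), so the nilradical of $\L$ cannot land in $\NN$ and the reductive summand $\mathfrak{h}$ would have to absorb the whole non-nilpotent structure --- in general this is as hard as Ado's theorem itself and cannot be done with $\rk\mathfrak{h}\le r$. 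Consequently your key step fails: with $[s,n]\neq 0$ for some $s\in\SS$, $n\in\NN$, the map $\Phi_{\NN}\oplus\Phi_{\mathfrak{h}}$ sends the commutator $[\Phi(s),\Phi(n)]$ to $0$ while $\Phi_{\NN}([s,n])\neq 0$, so it is not a Lie algebra homomorphism.

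The paper repairs exactly this point with the Zassenhaus extension (Proposition \ref{prop: splittable}): one represents all of $\bar\L=\NN\rtimes\SS$ on the single module $\sfrac{\U_R(\NN)}{\UU^c(\NN)}$, letting $n\in\NN$ act by left multiplication $\ell_n$ and $s\in\SS$ act by the induced derivation $\ad_s^*$; the identity $[\ad_s^*,\ell_n]=\ell_{[s,n]}$ is precisely what makes this compatible with the semidirect product bracket. The price is that this representation is faithful only on $\NN$, not on $\SS$, so one cannot conclude by restricting a faithful representation of $\bar\L$; instead one takes $\Phi|_{\L}\oplus\Ad_{\L}$ and uses that $\ker\Ad_{\L}=Z(\L)\subseteq R_n(\L)\subseteq R_n(\bar\L)$, where $\Phi$ is injective. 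This also makes your separate faithful representation of the reductive complement unnecessary. Your rank bookkeeping ($\rk\NN\le r$, plus a degree-$r$ summand) is consistent with the paper's count, but the two steps above --- the derivation action on $\U_R(\NN)$ and the use of $\Ad$ to handle the kernel --- are the missing ideas without which the proof does not close.
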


In particular, we recover for PIDs the best bound yet known over fields of characteristic zero. \\

More concretely, in Subsections \ref{subsec: nilpotent} and \ref{subsec: splittable} we reproduce quantitative results about the representability of  nilpotent and splittable $R$-Lie lattices, and in Subsection \ref{subsec: embedding} (see Theorem \ref{thm: embedding}) we prove the following:

\begin{theorem} \label{thm: embedding PID} Let $R$ be a PID of characteristic zero and let $\L$ be an $R$-Lie lattice. There exists an $R$-Lie lattice of the form $\bar{\L} = R_n(\bar{\L}) \rtimes \SS$ extending $\L,$ where $R_n(\bar{\L})$ is  the nilpotent radical of $\bar{\L}.$
\end{theorem}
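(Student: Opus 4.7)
The plan is to reduce to the field case via base change to $K := \mathrm{Frac}(R)$ and then carve out an $R$-form of the resulting decomposition, exploiting that submodules of finitely generated free modules over a PID are free. Since $K$ has characteristic zero, the classical embedding theorem (Theorem \ref{thm: embedding}) applied to $\L_K := \L \otimes_R K$ produces a finite-dimensional $K$-Lie algebra $\bar{\L}_K = N_K \rtimes \SS_K$ with $N_K = R_n(\bar{\L}_K)$, together with an embedding $\L_K \hookrightarrow \bar{\L}_K$. I identify $\L$ with its image inside $\bar{\L}_K$.

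To construct $\bar{\L}$ over $R$, first fix an $R$-basis $x_1, \ldots, x_r$ of $\L$ and decompose $x_i = n_i + s_i$ according to the direct sum $N_K \oplus \SS_K$. Next, choose a free $R$-lattice $\SS \subset \SS_K$ with $\SS_K = \SS \otimes_R K$ that is closed under the Lie bracket and contains $s_1, \ldots, s_r$; this is arranged by starting from any $K$-basis of $\SS_K$ and multiplying it by a suitable common denominator in $R$ to clear the finitely many relevant structure constants and coefficients. Then let $\tilde N$ be the smallest $K$-subspace of $N_K$ containing $n_1, \ldots, n_r$ and stable under $\ad(\SS_K)$ and the bracket; since $N_K$ is finite-dimensional over $K$, so is $\tilde N$. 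By the same denominator-clearing procedure I choose a free $R$-lattice $N \subset \tilde N$ of full $K$-rank with $n_1, \ldots, n_r \in N$, $[N, N] \subset N$, and $\ad(\SS)(N) \subset N$.

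Setting $\bar{\L} := N \oplus \SS$ as an $R$-module, the relations $[\SS, \SS] \subset \SS$, $[\SS, N] \subset N$ and $[N, N] \subset N$ show that $\bar{\L}$ is an $R$-Lie sublattice of $\bar{\L}_K$; moreover, $\L \subset \bar{\L}$ because each $x_i = n_i + s_i$ lies in $N + \SS$.

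The main obstacle is identifying $N$ with the nilpotent radical of $\bar{\L}$, so that the stated splitting $\bar{\L} = R_n(\bar{\L}) \rtimes \SS$ is literal rather than merely up to enlargement. Compatibility of the nilpotent radical with base change for $R$-Lie lattices over a PID of characteristic zero comes for free: any nilpotent ideal $\I \subset \bar{\L}$ gives a nilpotent ideal $\I \otimes_R K \subset \bar{\L}_K$, hence $R_n(\bar{\L}) \otimes_R K \subset R_n(\bar{\L}_K) = N_K$, whence $R_n(\bar{\L}) \subset \bar{\L} \cap N_K = N$. The reverse inclusion is immediate, as $N \subset N_K$ is nilpotent as a Lie algebra and is an ideal of $\bar{\L}$ by construction. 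This completes the desired embedding $\L \hookrightarrow \bar{\L} = R_n(\bar{\L}) \rtimes \SS$.
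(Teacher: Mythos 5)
Your overall strategy --- base change to $K=\mathrm{Frac}(R)$, apply the field-case embedding theorem, then descend to an $R$-form --- is the same as the paper's. The genuine gap is in the construction of $N$. You assert that ``the same denominator-clearing procedure'' yields a finitely generated lattice $N \subset \tilde N$ containing $n_1,\dots,n_r$ with $[N,N]\subset N$ and $\ad(\SS)(N)\subset N$. But clearing denominators means \emph{shrinking} a lattice (if $\{e_k\}$ is a $K$-basis, then $[\mu e_i,\mu e_j]=\sum_k(\mu c_{ij}^k)(\mu e_k)$, so one scales \emph{down} to make the structure constants integral), whereas containing the prescribed $n_i$ forces the lattice to be \emph{large}; the two requirements pull in opposite directions. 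The honest version of your step is to start from $N_0=\langle n_1,\dots,n_r\rangle_R$ and saturate, $N_{k+1}=N_k+[N_k,N_k]+[\SS,N_k]$, but an ascending chain of lattices in a finite-dimensional $K$-space need not stabilise ($\Z\subset\tfrac12\Z\subset\tfrac14\Z\subset\cdots$ inside $\mathbb{Q}$), and indeed if some $s\in\SS$ acts on $\tilde N$ with an eigenvalue in $K\setminus R$, then \emph{no} finitely generated $\ad(s)$-stable lattice containing the corresponding eigenvector exists. So the existence of $N$ cannot be extracted from the decomposition $\bar\L_K=N_K\rtimes\SS_K$ treated as a black box.

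The paper closes exactly this gap by keeping track of how Neretin's algebra is built: properties (N\ref{item:1})--(N\ref{item:3}) of the elementary expansions, in particular (\ref{eq: derivations}), $[x_i',\L]\subseteq R_n(\L)$, yield $[\NN_i,\L]\subseteq\NN_i$ for the iterated commutator lattices $\NN_i$ (equation (\ref{eq: are ideals})). Stability under $\bar\SS\subseteq\L+\frac1\lambda\NN$ then follows, and the saturation terminates because $\NN$ is nilpotent of class $c$, so that $\bar\NN=\sum_{i=1}^c\frac{1}{\lambda^i}\NN_i$ is a finite sum and hence finitely generated. To repair your write-up you must import these structural properties; your $\tilde N$, and the stability of $N$ under $\ad(\SS)$, are precisely where they are needed. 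Two smaller points: the field-case input you invoke is Theorem \ref{thm: Neretin}, not Theorem \ref{thm: embedding}; and since your $N\otimes_R K=\tilde N$ may be a proper subspace of $N_K$, the inclusion $R_n(\bar\L)\otimes_R K\subseteq R_n(\bar\L_K)$ needs an extra word, because a nilpotent ideal of the subalgebra $\tilde N\rtimes\SS_K$ is not automatically an ideal of $\bar\L_K$.
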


This result is based on the analogue for complex Lie algebras proved by Neretin \cite{Neretin}, and previously discussed in \cite{Malcev, Reed}. Lastly, Theorem \ref{thm: Ado}  is proved in Subsection \ref{subsec: Ado} using Theorem \ref{thm: embedding PID} and the arguments of the previous subsections. \\

Finally, we must note that for rings of positive characteristic, the generalisation is proved reproducing word-by-word the original demonstration of Iwasawa, and therefore we obtain the same bound we had for these fields, namely 
$$\deg \L \leq n^{\rk^3 \L},$$
where $n = \charac{R}$ (compare with \cite[Section 6.24]{Bah}).

\smallskip

\textbf{Notation} Hereinafter $R$ will always be a PID of characteristic zero, and we will use $K$ to denote fields. For an $R$-Lie lattice $\L,$ $R_n(\L)$ and $R_s(\L)$ refer to the nilpotent and solvable radicals of $\L.$ We denote by $\dim_K$ the $K$-vector space dimension, by $\rk_R$ ($\rk$ when $R$ is clear from the context) the rank of a free $R$-module, by $\langle X \rangle_R$ the $R$-module generated by a set $X,$ and  $\I \leq \L$ and $\I \unlhd \L$ represent respectively that $\I$ is a subalgebra and an ideal of $\L.$ We will use the abbreviation $[\I_1, \dots, \I_n] = [[\I_1, \dots, \I_{n-1}], \I_n]$ for iterated Lie brackets, and throughout the manuscript ``$:=$" is used to mean \textit{defined to be} in contrast with ''$=$'', which is used to denote \textit{equal to}. 

Finally, we recall that an $R$-submodule $\I \leq \L$ is isolated if whenever $r x \in \I$ for some $r \in R$ and $x \in \L,$ then $x \in \I,$ that is, the quotient $R$-module $\sfrac{\L}{\I}$ is torsion-free, and thus free.


\section{Preliminaries: adjoint and regular representations}

There are two natural Lie algebra representations in any $R$-Lie lattice $\L.$ On the one hand, by virtue of Jacobi's identity the adjoint representation $\Ad \colon \L \rightarrow \End_R(\L),$ $x \mapsto \ad_x,$ where $\ad_x \colon \L \rightarrow \L,$ $y \mapsto [x,y],$ is a finite Lie algebra representation. However, this representation is not faithful as its kernel is the centre of $\L,$ namely 
$$Z(\L) = \{ x \in \L \mid [x, y] = 0 \ \forall y \in \L \}.$$
In particular, when $\L$ is a semisimple $R$-Lie lattice, i.e. $\L$ has no abelian ideal,  then $\deg \L \leq \rk_R \L.$ 

Typically, Ado's Theorem is proved by constructing a \textit{finite} representation $\Phi \colon \L \rightarrow \End_R(W)$ that is faithful in $Z(\L),$ and then taking the finite faithful representation $\Ad \oplus \Phi.$ \\

\smallskip 

On the other hand, $\L$ acts on its universal enveloping algebra $\U_R(\L).$ Indeed, the tensor algebra of $\L$ is 
$$\mathbf{T}_R(\L) = R \oplus \L_1 \oplus \L_2 \oplus \dots \oplus \L_i \oplus \dots,$$
where $\L_i := \L \otimes \stackrel{(i)}{\dots} \otimes \L$ is an $R$-module with the natural $R$-module structure of the tensor product, and the multiplication in $\mathbf{T}_R(\L)$ is defined extending by linearity the rule
$$(x_1 \otimes \dots \otimes x_i) \otimes (y_1 \otimes \dots \otimes y_j) = x_1 \otimes \dots \otimes x_i \otimes y_1 \otimes \dots \otimes y_j.$$

Then, the universal enveloping algebra of $\L$ is
$$\U_R(\L) : = \frac{\mathbf{T}_R(\L)}{\mathfrak{R}},$$
where $\mathfrak{R}$ is the ideal generated by the elements 
\begin{equation}
\label{eq: define the ideal}
[x,y] - (x \otimes y - y \otimes x), \ \forall x, y \in \L.
\end{equation}
The image of $x_{i_1} \otimes \dots \otimes x_{i_t}$ in $\U_R(\L)$ will be simply denoted by the monomial $x_{i_1} \dots x_{i_t}.$\\

Since $R$ is a PID and $\L$ is finitely generated, the natural inclusion $\iota \colon \L \cong \L_1 \hookrightarrow \U_R(\L)$ is a monomorphism (see \cite[Theorem 3.2]{Wei}), and therefore, we can assume that $\L \subseteq U_R(\L).$ The universal enveloping algebra is characterised by the Poincar\'e-Birkhoff-Witt Theorem:

\begin{theorem}[\textup{cf. \cite[Theorem 3.2]{Wei}}]
\label{PBW}
Let $\L$ be an $R$-Lie lattice with basis $\{x_1, \dots, x_r\}.$ Then $\U_R(\L)$ is a free $R$-module with basis 
\begin{equation}
    \label{eq: basis}
\left\{ x_1^{\alpha_1}\dots x_r^{\alpha_r}  \mid \alpha_i \in \N_0 \right\},
\end{equation}
where $x_1^0\dots x_r^0 =1$ is the identity of $R$.
\end{theorem}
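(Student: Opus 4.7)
The plan is to reduce Theorem \ref{PBW} to the classical Poincar\'e--Birkhoff--Witt theorem over fields, by passing to the field of fractions of $R$, combined with a direct spanning argument carried out inside $\U_R(\L)$.

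First, I would show that the monomials listed in (\ref{eq: basis}) span $\U_R(\L)$ as an $R$-module. By construction $\U_R(\L)$ is generated as an $R$-algebra by $1$ together with the image of $\L$, so every element is an $R$-linear combination of monomials $x_{i_1}\cdots x_{i_t}$ in arbitrary indices $i_j \in \{1,\dots,r\}$. Whenever such a monomial contains an adjacent out-of-order factor $x_i x_j$ with $i > j$, the defining relation (\ref{eq: define the ideal}) allows one to substitute $x_i x_j = x_j x_i + [x_i, x_j]$, and since $[x_i, x_j] \in \L$ is itself an $R$-linear combination of $x_1,\dots,x_r$, the resulting terms are either monomials of the same total degree with strictly fewer inversions, or monomials of strictly smaller total degree. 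Ordering pairs $(t, d)$ lexicographically (total degree, number of out-of-order adjacent pairs), an induction on $(t, d)$ rewrites every monomial as an $R$-linear combination of ordered monomials of the form (\ref{eq: basis}).

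Next, I would base-change to the field of fractions $F$ of $R$, setting $\L_F := \L \otimes_R F$. Since the tensor algebra is compatible with flat base change and the ideal generated by the relations (\ref{eq: define the ideal}) extends to its $F$-counterpart in $\mathbf{T}_F(\L_F)$, matching the universal properties yields a natural isomorphism $\U_R(\L) \otimes_R F \cong \U_F(\L_F)$ of associative $F$-algebras. The classical PBW theorem over $F$ then guarantees that the ordered monomials in $(x_1 \otimes 1), \dots, (x_r \otimes 1)$ form an $F$-basis of $\U_F(\L_F)$. From here, $R$-linear independence in $\U_R(\L)$ follows at once: any relation $\sum r_\alpha\, x_1^{\alpha_1}\cdots x_r^{\alpha_r} = 0$ maps under $u \mapsto u \otimes 1$ to an $F$-linear relation among the corresponding ordered monomials in $\U_F(\L_F)$, and since $R$ embeds in $F$ each $r_\alpha$ must vanish. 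Combined with the spanning step, this shows that (\ref{eq: basis}) is an $R$-basis of $\U_R(\L)$.

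The main technical subtlety I expect to encounter is verifying the base-change isomorphism $\U_R(\L) \otimes_R F \cong \U_F(\L_F)$: although formally this follows by matching universal properties against the forgetful functor from associative $F$-algebras to $F$-Lie algebras, some care is needed to confirm that the relation ideal $\mathfrak{R} \subseteq \mathbf{T}_R(\L)$ localises correctly to the analogous ideal in $\mathbf{T}_F(\L_F)$, which in turn uses the fact that $\L$ is free (hence flat) over $R$. The spanning argument, by contrast, is routine once the correct well-founded order on monomials is identified.
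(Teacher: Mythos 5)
Your proof is correct, and it takes a genuinely different route from the one the paper relies on: the paper simply cites Weigel's Theorem~3.2 and sketches only the spanning half (the reordering via $x_jx_i = x_ix_j - [x_i,x_j]$), the full statement resting on the classical direct construction of a $\U_R(\L)$-action on the polynomial module with basis the ordered monomials, which works over any commutative ring once $\L$ is free. Your fraction-field reduction is cleaner for the case at hand: the base-change isomorphism $\U_R(\L)\otimes_R F \cong \U_F(\L_F)$ follows from right-exactness of $-\otimes_R F$ together with $\mathbf{T}_R(\L)\otimes_R F\cong \mathbf{T}_F(\L_F)$ (freeness of $\L$) and the observation that the image of $\mathfrak{R}$ generates the relation ideal of $\mathbf{T}_F(\L_F)$, and --- crucially --- your independence step does not require the map $\U_R(\L)\to \U_F(\L_F)$ to be injective, only that a relation with coefficients in $R$ pushes forward to a relation over $F$, where classical PBW applies and $R\hookrightarrow F$ finishes the job. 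The trade-off is that your argument needs $R$ to be a domain and presupposes PBW over fields, whereas the direct construction needs neither; since the theorem is stated for PIDs this costs nothing here. One small correction in the spanning step: the secondary induction parameter should be the \emph{total} number of inversions of the word, not the number of out-of-order \emph{adjacent} pairs --- swapping one adjacent inversion always decreases the former by exactly one but can leave the latter unchanged (e.g. $x_2x_3x_1 \mapsto x_2x_1x_3$); your phrase ``strictly fewer inversions'' is the right invariant, so the fix is only to the parenthetical definition of the order.
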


The idea is that given two monomials, their product can expressed as a suitable linear combination of elements of the form (\ref{eq: basis}) by successively applying the identity $x_jx_i = x_ix_j - [x_i, x_j]$ to reorder the indeterminates. 

As we have said, $\L$ acts on $\U_R(\L)$ by multiplication and this gives rise to the (left) regular representation $\La \colon \L \hookrightarrow \End_R\left(\U_R(\L)\right),$ where $\La(x)$ is the left multiplication map $\ell_x \colon \U_R(\L) \rightarrow \U_R(\L),$ $y \mapsto xy.$ 

By virtue of (\ref{eq: define the ideal}), $\La$ is a Lie algebra representation, and it is faithful as 
$$\ell_x(1) = x \neq y = \ell_y(1) \textrm{ for all } x,y \in \L.$$ 
Although $\La$ is not finite, by virtue of the universal property of the enveloping algebra (see \cite[Proposition 3.1]{Wei}), every finite Lie algebra representation of $\L$ factors through $\U_R(\L).$ Hence, $\L$ admits a finite faithfull Lie algebra representation if and only if there exists an isolated ideal 
$\XX \unlhd \U_R(\L)$ such that $\XX \cap \L = \{0 \}.$ Actually, for the \textit{if} it is enough to consider the induced action on the free $R$-module $\sfrac{\U_R(\L)}{\XX}.$ 


\section{Main result}

Let $\L$ be an $R$-Lie lattice and let $K$ be a field extending $R$ --e.g. the fraction field of $R$--, the tensorial $K$-Lie algebra $\L_K := \L \otimes_R \K$ will be useful in the following subsections. Note in passing that even though $\L_K$ admits a matricial representation $\Phi \colon \L_K \hookrightarrow \Mat_n(K),$ $\Phi|_{\L}$ might not be a  matricial representation over $R.$ 

\subsection{Nilpotent Lie lattices}

\label{subsec: nilpotent}

For nilpotent $R$-Lie lattices the construction of Birkhoff \cite{Birk} is still valid over PIDs.\\

Suppose that $\L$ is a nilpotent $R$-Lie lattice of nilpotency class $c,$ and let $K$ be the fraction field of $R.$ Since the Lie bracket is bilinear, $\L_K = \L \otimes_R K$ is also a nilpotent Lie algebra of nilpotency class $c.$ 

For each $i \in \{1, \dots, c\},$ define the isolated ideal  $\L_i = [\L_K, \stackrel{(i)}{\dots}, \L_K ] \cap \L \unlhd \L,$ and choose a basis $\{x_1, \dots, x_{r}\}$ for $\L$ as free $R$-module in such way that the first elements $x_1, \dots, x_{r_{1}}$ are an $R$-basis for $\L_{c},$ the first elements $x_1, \dots, x_{r_2}$ ($r_2 > r_1$) are an $R$-basis for $\L_{c-1}$ and so forth. In view of Theorem \ref{PBW}, the elements of $\U_R(\L)$ are of the form $\sum_{\alpha \in \N_0^{(r)}} c_\alpha \x^\alpha,$ where $\x^\alpha$ stands for $x_1^{\alpha_1} \dots x_r^{\alpha_r}.$  Accordingly, define a weight function $\omega \colon \U_R(\L) \rightarrow \N_0 \cup \{ \infty \}$ in the following fashion:\\

\begin{center}
\begin{tabular}{ lll} 
\rule{0pt}{2ex}   $\omega(x_i) = \max\left\{ m \mid x_i \in \L_{m} \right\},$ & $\omega(\x^\alpha) = \sum_{i=1}^r \alpha_i \omega(x_i), $ \\ 
\rule{0pt}{3ex}   $\omega\left(\sum_\alpha c_\alpha \x^\alpha \right) = \min\left\{ \omega(\x^\alpha) \mid c_\alpha \neq 0 \right\},$ & $\omega(0) = \infty.$ 
\end{tabular}
\end{center}
\vspace{1em}
 
Observe that $\omega([x_i, x_j]) \geq \omega(x_i) + \omega(x_j) $ for all $i, j \in \{ 1, \dots, r\},$ and so
\begin{equation}
    \label{eq: biderketa}
    \omega(u v ) \geq \omega(u) + \omega(v) \ \forall u, v \in \U_R(\L).
\end{equation}

For each $m \in \N_0,$  consider the isolated $R$-modules
$$
\UU^m(\L) := \left\{ u \in \U_R(\L) \mid \omega(u) > m \right\}
$$
--or simply $\UU^m$ when the lattice is clear from the context--. By (\ref{eq: biderketa}), $\UU^m(\L)$ is an ideal and thus for every $x \in \L$ we have that $\ell_x(\UU^m) \subseteq \UU^m,$ so for any $m \in \N$ the regular representation induces the finite representation
$$
\La_m \colon \L \rightarrow \End_R\left( \frac{\U_R(\L) }{ \UU^m(\L)}\right), \ x \mapsto \ell_x 
,$$
whose kernel is $\L \cap \UU^m(\L)$ 
--with an abuse of notation, whenever $f \in \End_R(\U_R(\L))$ satisfies $f(\XX) \subseteq \XX$ for some ideal $\XX \unlhd \U_R(\L),$ we will keep $f$ to denote the endomorphism of $\End_R\left(\sfrac{\U_R(\L)}{\XX}\right)$ defined as $x + \XX \mapsto f(x) + \XX$--.\\

Since $\L \cap \UU^{c}(\L) =\{0\},$ $\La_c$ is a finite faithful representation and its degree is
$$ \left|\{\x^\alpha \mid \omega(\x^\alpha) \leq c \} \right|,$$
as these monomials are a basis for $\sfrac{\U_R(\L)}{\UU^c}.$ Finally, this number was bounded by Burde (see \cite[Lemma 5(3) and Proposition 6]{Burde}):
\begin{equation}
\label{eq: bornea}
\deg \La_c  =  \rk_R \left( \frac{\U_R(\L)}{\UU^c(\L)} \right) = |\{\x^\alpha \mid \omega(\x^\alpha) \leq c \}| \leq \eta \frac{2^r}{\sqrt{r}},
\end{equation}
where $\eta = \sqrt{\frac{2}{\pi}} \prod_{l=1}^\infty \frac{2^l}{2^l-1} \sim 2.763.$


\subsection{Splittable Lie lattices}
\label{subsec: splittable}

We say that $\L$ is splittable if the short exact sequence 
$$0 \rightarrow R_n(\L) \rightarrow \sfrac{\L}{R_n(\L)} \rightarrow 0$$
splits in the category of $R$-Lie algebras, that is, if there exists an $R$-Lie subalgebra $\GG \leq \L$ such that $\L = R_n(\L) \rtimes \GG.$

In the splittable case we can blend the preceding regular representation for $R_n(\L)$ and representations induced from derivations, namely endomorphisms $D \in \End_R(\L)$ that satisfy Leibniz identity, i.e.
$$D([x,y]) = [x,D(y)] + [D(x), y] \ \forall x, y \in \L.$$
The collection of all derivations of  $\L$ is denoted by $\Der_R(\L)$. For example, in view of Jacobi's identity, $\ad_x$ is a derivation for every $x \in \L.$  Starting from $D \in \Der_R(\L)$  we can define a derivation $D^*$ of $\U_R(\L)$ by imposing Jacobi's identity, that is, by taking the linear extension of the rule
$$D^*(x_{i_1} \dots x_{i_t}) = \sum_{j} x_{i_1} \dots x_{i_{j-1}} D(x_{i_j}) x_{i_{j+1}} \dots x_{i_t},$$
together with $D^*(1)= 0$ as it must happen for every derivation of an algebra with identity. 

In keeping with the notation of the previous subsection, we have:

\begin{lemma}
\label{lem: deribazioak}
Let $\L$ be a nilpotent $R$-Lie lattice and $D \in \Der_R(\L).$ Then $D^*(\UU^m(\L)) \subseteq \UU^m(\L)$ for every $m \in \N.$ 
\end{lemma}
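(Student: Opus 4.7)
The plan is to prove the stronger statement that $\omega(D^*(u))\geq \omega(u)$ for every $u\in\U_R(\L)$, from which the claim follows immediately, since $\omega(u)>m$ implies $\omega(D^*(u))>m$.

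The first step is the easy observation that every derivation $D\in\Der_R(\L)$ preserves each of the ideals $\L_i=[\L_K,\stackrel{(i)}{\dots},\L_K]\cap\L$. Extending $D$ to $\L_K$ by linearity and arguing by induction on $i$ using the Leibniz identity, one gets $D(\gamma_i(\L_K))\subseteq\gamma_i(\L_K)$; intersecting with $\L$ gives $D(\L_i)\subseteq\L_i$. In particular, for each basis element $x_i$ one has $D(x_i)\in\L_{\omega(x_i)}$, so
\[
\omega(D(x_i))\geq \omega(x_i).
\]

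Next I would move this weight inequality from the generators to the PBW monomials. Applying the defining rule of $D^*$ to $\x^\alpha=x_1^{\alpha_1}\cdots x_r^{\alpha_r}$ yields a sum whose terms are obtained by replacing one occurrence of some $x_i$ by $D(x_i)$. Using the submultiplicativity (\ref{eq: biderketa}) of $\omega$ together with the previous inequality, each such term has weight at least
\[
\sum_{j\neq i}\alpha_j\omega(x_j)+(\alpha_i-1)\omega(x_i)+\omega(D(x_i))\;\geq\;\sum_{j=1}^r\alpha_j\omega(x_j)=\omega(\x^\alpha).
\]
Hence $\omega(D^*(\x^\alpha))\geq \omega(\x^\alpha)$. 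By $R$-linearity of $D^*$ and the definition of $\omega$ on linear combinations, this extends to $\omega(D^*(u))\geq \omega(u)$ for every $u=\sum c_\alpha\x^\alpha$, which is what we wanted.

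The only mildly delicate point is the first step: one must be careful that the extension of $D$ to $\L_K$ genuinely preserves the terms of the lower central series of $\L_K$, and that the resulting element lands back in $\L$ (so in $\L_i$, not just in $\gamma_i(\L_K)$). Both facts are routine, and after that the argument is just bookkeeping with the weight function and (\ref{eq: biderketa}), so I do not expect any real obstacle.
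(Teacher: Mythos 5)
Your proposal is correct and follows essentially the same route as the paper: establish $D(\L_i)\subseteq\L_i$, deduce $\omega(D(x_i))\geq\omega(x_i)$ for the basis elements, and then use the submultiplicativity (\ref{eq: biderketa}) to bound the weight of each term of $D^*$ applied to a monomial by the weight of the original monomial. The only difference is that you spell out the induction behind $D(\L_i)\subseteq\L_i$, which the paper states as an immediate consequence of $D$ being a derivation.
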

\begin{proof}
Let $c$ be the nilpotency class of $\L$, and let $\{x_1, \dots, x_r\}$ be the basis of $\L$ with respect to which the weight function $\omega$ has been defined. Since $D$ is a derivation, $D(\L_i) \subseteq \L_i$ for all $i \in \{1, \dots, c\},$ so $\omega(D(x_i)) \geq \omega(x_i)$ for all $i \in \{1, \dots, r\}.$ Hence, if $x_{i_1} \dots x_{i_t} \in \UU^m(\L),$ by (\ref{eq: biderketa}),
\[ \omega\left(D^*(x_{i_1} \dots x_{i_t}) \right) \geq \min_{j = 1, \dots, t}\left\{ \omega(x_{i_1} \dots D(x_{i_j}) \dots x_{i_t}) \right\} \geq \omega(x_{i_1} \dots x_{i_t}) > m. \qedhere \]
\end{proof}

\begin{proposition}[Zassenhaus extension\index{Zassenhaus!extension}, cf. \textup{\cite[Chapter VI.2, Theorem 1]{Jacobson}}]
\label{prop: splittable}
Let $\L $ be a splittable $R$-Lie lattice and let $c$ be the nilpotency class of $R_n(\L)$. Then, there exists a finite $R$-Lie algebra representation 
$$\Phi \colon \L \rightarrow \End_R\left( \frac{\U_R(R_n(\L))}{\UU^c(R_n(\L))} \right)$$
 that is injective in $R_n(\L)$ and such that
\begin{equation}
\label{eq: borne nilpotent}
 \deg\Phi \leq \eta \frac{2^{\rk{R_n(\L)}}}{\sqrt{\rk R_n(\L)}}, 
\end{equation}
where $\eta \sim 2.763.$
\end{proposition}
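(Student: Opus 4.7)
The plan is to define $\Phi$ on the two summands $R_n(\L)$ and $\GG$ of the splitting separately, using the regular representation for the nilpotent part and derivations for the complement, and then to extend by $R$-linearity. Write $\NN := R_n(\L)$ for brevity. For $n \in \NN$, set $\Phi(n) := \ell_n$, the left multiplication by $n$ on $\U_R(\NN)$; this descends to $\U_R(\NN)/\UU^c(\NN)$ because $\UU^c(\NN)$ is an ideal. For $g \in \GG$, observe that since $\NN \unlhd \L$, the map $\ad_g|_\NN$ is a derivation of $\NN$; let $\Phi(g) := (\ad_g|_\NN)^*$ be the induced derivation of $\U_R(\NN)$, which preserves $\UU^c(\NN)$ by Lemma \ref{lem: deribazioak} and hence descends to the quotient. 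Then extend $R$-linearly using the direct sum decomposition $\L = \NN \oplus \GG$.

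The verification that $\Phi$ is a Lie algebra homomorphism splits into three cases according to the types of the arguments. The case $[\Phi(n_1), \Phi(n_2)] = \Phi([n_1, n_2])$ is immediate from the fact that $\La$ is a Lie algebra morphism. For the mixed case, pick $u \in \U_R(\NN)$ and compute using the Leibniz rule of $(\ad_g)^*$:
\[
[(\ad_g)^*, \ell_n](u) = (\ad_g)^*(nu) - n\,(\ad_g)^*(u) = (\ad_g)^*(n)\,u = [g,n]\,u = \ell_{[g,n]}(u),
\]
where we used that $(\ad_g)^*(n) = \ad_g(n) = [g,n]$ on the copy $\NN \hookrightarrow \U_R(\NN)$, and that $[g,n] \in \NN$ since $\NN$ is an ideal. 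For the case $[\Phi(g_1), \Phi(g_2)] = \Phi([g_1,g_2])$, one needs the identity $[D_1^*, D_2^*] = [D_1, D_2]^*$ for derivations $D_1, D_2 \in \Der_R(\NN)$; this is a standard induction on the length of a PBW monomial using Leibniz, noting that both sides are derivations of $\U_R(\NN)$ agreeing on the generating set $\NN$, and hence coincide by the universal property implicit in Theorem \ref{PBW}.

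Faithfulness on $\NN$ is then immediate: if $\Phi(n) = 0$ in $\End_R(\U_R(\NN)/\UU^c(\NN))$, then applying $\ell_n$ to the class of $1$ yields $n \in \UU^c(\NN)$, but $\NN \cap \UU^c(\NN) = \{0\}$ since $\omega(x) \le c$ for every $x \in \NN$ (the nilpotency class of $\NN$ being $c$). Finally, the degree bound is nothing but the bound already established in Subsection \ref{subsec: nilpotent}: the dimension of $\U_R(\NN)/\UU^c(\NN)$ as a free $R$-module is controlled by (\ref{eq: bornea}) applied to $\NN$, giving exactly (\ref{eq: borne nilpotent}).

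The main technical point I expect to spend care on is the identity $[D_1, D_2]^* = [D_1^*, D_2^*]$, because it is the only step that is not a direct consequence of the previous subsection or of formal manipulation of the semidirect product; the remaining bookkeeping (well-definedness on the quotient, the mixed bracket, faithfulness on $\NN$) should reduce to one-line invocations of Lemma \ref{lem: deribazioak} and the construction of $\omega$.
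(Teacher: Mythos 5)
Your proposal is correct and follows essentially the same route as the paper: define $\Phi(n+s)=\ell_n+\ad_s^*$ on $\U_R(R_n(\L))/\UU^c(R_n(\L))$, check the mixed bracket via the Leibniz computation $[D,\ell_n]=\ell_{D(n)}$, and read off faithfulness on $R_n(\L)$ and the bound from the nilpotent case. You are in fact slightly more careful than the paper on the $[\Phi(g_1),\Phi(g_2)]=\Phi([g_1,g_2])$ case (the identity $[D_1,D_2]^*=[D_1^*,D_2^*]$), which the paper leaves implicit.
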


 \begin{proof}
Denote $R_n(\L)$ by $\NN,$ then $\L = \NN \rtimes \SS$ for some $R$-Lie subalgebra $\SS \leq \L$. By Lemma \ref{lem: deribazioak}, $\ad_{x}^*\left(\UU^c(\NN) \right) \subseteq \UU^c(\NN)$ for all $x \in \L,$ so we can define the map
$$\Phi \colon \L= \NN \oplus \SS \rightarrow \End_R\left(\sfrac{ \U_R(\NN) }{\UU^c(\NN)}\right), \ n  + s \mapsto  \ell_n + \ad_{s}^* .$$
In order to show that it is an $R$-Lie algebra homomorphism, it suffices to confirm that 
$$\Phi\left(  [s, n ]\right) = \left[\Phi(s), \Phi(n) \right] = \left[\ad_{s}^*, \ell_n \right]$$
for all $n \in \NN$ and $s \in \SS.$ Indeed, for any $n \in \NN$ and  $D \in \Der_R\left(\U_R(\NN)\right):$
$$[D, \ell_n](u) = D \circ \ell_n(u)- \ell_n \circ D(u) = D(n) u = \ell_{D(n)}(u)  \ \forall u \in \NN,$$
and, since $\NN$ is an ideal, $[s, n] \in \NN,$ so
$$\Phi\left([s, n] \right) = \ell_{[s, n]} = \ell_{\ad_{s}^*(n)}= \left[\ad_{s}^*, \ell_n \right] = \left[\Phi(s), \Phi(n) \right].$$
Consequently, $\Phi$ is a finite $R$-Lie algebra representation. In addition,  $\Phi|_\NN$ is nothing but the  faithful representation $\La_c$ of $\NN.$ Finally, (\ref{eq: borne nilpotent}) follows from (\ref{eq: bornea}).
\end{proof}


\subsection{Embedding Theorem} \label{subsec: embedding} 

In  \cite[Chapter IV.2]{Jacobson}, and the succeeding works following it, Levi's Theorem is crucial; namely, if $K$ is a field of characteristic zero there exists a semisimple Lie algebra $\GG \leq \L$ such that $\L = R_s(\L) \rtimes \SS$ (see \cite[Chapter III.9]{Jacobson}). However, this results is not longer true for PIDs. For instance, the $\Z$-Lie algebra $\mathfrak{sl}_2(2\Z) \oplus \mathfrak{t}_2(2\Z)$ ---the direct sum of $2 \times 2$ matrices of trace zero and $2 \times 2$ upper triangular matrices with coefficients in $2\Z$--- does not admit such a decomposition (see \cite[Example in pg. 838]{Chur}).\\

Nevertheless, every Lie lattice embeds in a splittable (in the sense of Subsection \ref{subsec: splittable}) $R$-Lie lattice. In effect, over algebraically closed fields this result was first proved for solvable Lie algebras by Mal'cev \cite {Malcev} and Reed \cite{Reed}, and using similar ideas Neretin \cite{Neretin} proved the following (albeit \cite{Neretin} is about complex Lie algebras, the proof is still valid, with small remarks, for any field of characteristic zero):

\begin{theorem}[cf. \textup{\cite[Lemma 1]{Neretin}}] \label{thm: Neretin} Let $K$ be a field of characteristic zero and $\L$ a finite dimensional $K$-Lie algebra. There exists a  splittable $K$-Lie algebra $  \bar\L = R_n(\bar \L) \rtimes \SS,$ where $\SS$ is reductive, extending $\L.$   
\end{theorem}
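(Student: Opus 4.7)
The plan is to combine Levi's theorem with a Mal'cev-style splitting of the solvable radical, implemented by adjoining a torus of semisimple derivations. First I would apply Levi's theorem in characteristic zero to write $\L = \mathfrak{r} \rtimes \mathfrak{s}$ with $\mathfrak{r} := R_s(\L)$ the solvable radical and $\mathfrak{s}$ a semisimple Levi complement. The structural fact $[\L, \mathfrak{r}] \subseteq R_n(\L)$, valid in characteristic zero, yields that $\mathfrak{r}/R_n(\mathfrak{r})$ is abelian; this commutativity is what allows the semisimple parts of the $\ad_x$ to fit together into a torus.

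Next I would construct the enlarged solvable algebra $\bar{\mathfrak{r}}$. For each $x \in \mathfrak{r}$, take the Jordan--Chevalley decomposition $\ad_x = S_x + N_x$ in $\End_K(\mathfrak{r})$, available over $K$ because characteristic-zero fields are perfect; a classical lemma ensures that the Jordan parts of a derivation of a finite-dimensional Lie algebra are again derivations, so $S_x, N_x \in \Der_K(\mathfrak{r})$. Equivalently, consider the algebraic hull $\mathfrak{A} \subseteq \End_K(\mathfrak{r})$ of $\ad(\mathfrak{r})$; since $\mathfrak{A}$ is an algebraic Lie subalgebra, it is closed under Jordan decomposition and therefore contains every $S_x$ and $N_x$. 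Moreover, $\mathfrak{A}$ is solvable (being the algebraic hull of the solvable algebra $\ad(\mathfrak{r})$), so it admits a Chevalley decomposition $\mathfrak{A} = \mathfrak{U} \rtimes \mathfrak{T}$, with $\mathfrak{U}$ the unipotent radical (consisting of the nilpotent elements) and $\mathfrak{T}$ a maximal torus of commuting semisimple operators. I would then define $\bar{\mathfrak{r}}$ as the extension of $\mathfrak{r}$ obtained by gluing in $\mathfrak{T}$ along $\mathfrak{T} \cap \ad(\mathfrak{r})$; concretely, this is the central extension of $\mathfrak{A}$ by $Z(\mathfrak{r})$ that restricts, over $\ad(\mathfrak{r}) \subseteq \mathfrak{A}$, to the canonical extension $\mathfrak{r}$. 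A direct computation then verifies
\[
\bar{\mathfrak{r}} \;=\; R_n(\bar{\mathfrak{r}}) \rtimes \mathfrak{T},
\]
where $R_n(\bar{\mathfrak{r}})$ is the preimage of $\mathfrak{U}$.

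Finally I would reassemble the Levi piece. The action of $\mathfrak{s}$ on $\mathfrak{r}$ extends naturally to an action on $\Der_K(\mathfrak{r})$ and, because $\mathfrak{s}$ is semisimple, it preserves Jordan decomposition; hence $\mathfrak{s}$ normalises $\mathfrak{T}$, and $\bar\L := \bar{\mathfrak{r}} \rtimes \mathfrak{s}$ realises the desired decomposition $R_n(\bar\L) \rtimes \SS$ with reductive complement $\SS := \mathfrak{T} \rtimes \mathfrak{s}$. The principal obstacle is the Chevalley decomposition of the algebraic hull $\mathfrak{A}$: arranging the semisimple parts $S_x$ into a common abelian subalgebra of $\mathfrak{A}$ of simultaneously diagonalisable operators. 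This is the technical heart of the argument and relies on the algebraic-group perspective --- $\mathfrak{A}$ is the Lie algebra of a solvable linear algebraic group, which admits a Chevalley decomposition into unipotent radical and torus that descends to the Lie algebra level --- together with the nilpotence of the commutators $[\ad_x, \ad_y]$ forced by solvability of $\mathfrak{r}$, which prevents the semisimple parts from generating a nonabelian subalgebra.
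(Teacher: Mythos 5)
Your strategy is genuinely different from the paper's. The paper follows Neretin's inductive scheme: starting from a Levi decomposition, it performs one \emph{elementary expansion} at a time, picking a single $y \in \MM\setminus R_n(\KK)$ spanning an $\SS$-invariant complement to a codimension-one ideal, splitting $\ad_y = d_{s,y}+d_{n,y}$, and replacing $y$ by two formal symbols $x'+z'$; the full reducibility of the $\SS$-action forces $[y,\SS]=0$, hence $[z',\SS]=0$, so the reductive part grows by a central line and $\dim R_n$ increases by one, which makes the induction terminate. You instead propose the classical ``all at once'' Mal'cev/nilshadow splitting: adjoin an entire maximal torus $\mathfrak{T}$ of the algebraic hull $\mathfrak{A}$ of $\ad(\mathfrak{r})$ in one step. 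This is a legitimate alternative route, and it buys conceptual clarity (one construction instead of an induction), at the price of invoking the theory of algebraic hulls and the Chevalley decomposition of solvable algebraic Lie algebras over a not necessarily closed field --- machinery the paper deliberately avoids. The paper's inductive argument is precisely designed to dodge the main difficulty of your approach, namely that $x\mapsto S_x$ is not additive and the various $S_x$ need not commute a priori.

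That said, as written your proposal has two concrete gaps. First, the description of $\bar{\mathfrak{r}}$ as ``the central extension of $\mathfrak{A}$ by $Z(\mathfrak{r})$'' is not correct: the kernel of the natural map $\mathfrak{r}\rtimes\mathfrak{T}\to\Der_K(\mathfrak{r})$ is not central (elements of $\mathfrak{T}$ need not annihilate $Z(\mathfrak{r})$), and the map need not surject onto $\mathfrak{A}$. What you actually want is simply $\bar{\mathfrak{r}}:=\mathfrak{r}\rtimes\mathfrak{T}$ with $\mathfrak{T}$ acting by derivations, followed by a verification that $N:=\{x-t_x : x\in\mathfrak{r}\}$ (where $\ad_x=u_x+t_x$ along $\mathfrak{A}=\mathfrak{U}\oplus\mathfrak{T}$) is a nilpotent ideal with $\bar{\mathfrak{r}}=N\rtimes\mathfrak{T}$; this verification uses $[\mathfrak{r},\mathfrak{r}]\subseteq R_n(\mathfrak{r})$ and $D(\mathfrak{r})\subseteq R_n(\mathfrak{r})$ for every derivation $D$, and you should carry it out rather than call it ``a direct computation.'' Second, the claim that $\mathfrak{s}$ normalises $\mathfrak{T}$ does not follow from ``$\mathfrak{s}$ preserves Jordan decomposition'': maximal tori of $\mathfrak{A}$ are only unique up to conjugacy, so you must \emph{choose} an $\mathfrak{s}$-invariant one. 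This can be repaired (e.g.\ note $[\ad_{\mathfrak{s}},\mathfrak{A}]\subseteq\mathfrak{U}$ because $[\mathfrak{s},\mathfrak{r}]\subseteq R_n(\L)$, and use Weyl's complete reducibility to produce an $\mathfrak{s}$-fixed complement, then argue it can be taken to be a torus), but an argument is required, and without it the reductivity of $\SS=\mathfrak{T}\rtimes\mathfrak{s}$ --- which needs $[\mathfrak{s},\mathfrak{T}]=0$, not just normalisation --- is unproved. Neither gap is fatal to the strategy, but both are real holes in the proof as it stands.
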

\smallskip
The above theorem is proved by successively applying elementary expansions. Indeed, suppose that we have a $K$-Lie algebra $\KK = \MM \rtimes \SS$ extending $\L$ such that $\MM$ is a solvable ideal containing $R_n(\KK)$ and $\SS$ is a reductive --direct sum of a semisimple and an abelian algebra-- subalgebra that acts fully irreducibly on $\MM.$ We shall construct another Lie algebra $\KK'$ extending $\KK$ that satisfies those same conditions.

By \cite[Chapter III.7, Theorem 13]{Jacobson}, $[\MM, \KK] \leq R_n(\KK).$ Thus, unless $\MM$ is nilpotent, there exists an ideal $\I \unlhd \MM$ of codimension one containing $R_n(\KK)$. Since the action of $\SS$ is fully irreducible, there exists an element $y \in \MM \setminus R_n(\KK)$ such that $\MM = \I \oplus Ky$ as $\SS$-modules, in particular, $[y, \SS] \subseteq R_n(\KK) \cap Ky = \{ 0\}.$ Moreover, according to the Jordan-Chevalley decomposition (see \cite[Chapter III.11, Theorem 16]{Jacobson}), the derivation  $\ad_{y} \in \Der_R(\KK)$ decomposes as $d_{s,y} + d_{n,y}$ where  $d_{s,y}$ and $d_{n,y}$ are respectively a semisimple and a nilpotent $K$-linear endomorphism.

\begin{remark} \label{rmk: are-derivations}
Both $d_{s,y}$ and $d_{n,y}$ are in $ \Der_K(\KK)$. Indeed, when  $K$ is algebraically closed it was proved in  \cite[Proposition 3]{Reed}, as 
$d_{s,n}(v) = \alpha v$ provided that $v$ belongs to the generalised $\alpha$-eigenspace of $\ad_y.$

In general, let us write $S = d_{s,y}$ and $N=d_{n,y}$ and let $\bar{K}$ be the algebraic closure of $K.$ Suppose that $\bar{S} + \bar{N}$ is the Jordan-Chevalley decomposition of $\ad_y$ in $\KK_{\bar K} = \KK \otimes_K \bar{K}.$ Then 
$$S \otimes \bar{K} + N \otimes \bar K = \ad_y = \bar{S} + \bar{N}$$
are two decompositions of $\ad_y \in \End_{\bar{K}} (\KK_{\bar{K}}),$ so by the uniqueness $\bar{S} = S \otimes \bar{K}$ and $\bar{N} = N \otimes \bar{K}.$ Finally, since $\bar{S}$ and $\bar{N}$ satisfy Leibniz identity, so do $S$ and $N$. 
\end{remark}

Thus, we can construct a so-called elementary expansion, namely the $K$-Lie algebra 
$$\KK' = \I \oplus \SS  \oplus  Kx' \oplus  K z',$$
where $x'$ and $z'$ are formal symbols satisfying
\vspace{1em}
\begin{center}
\begin{tabular}{ lll} 
\rule{0pt}{2ex}   $[x', u] = d_{n,y}(u),$ & $[z', u] = d_{s,y}(u),$ & $\left[x', z' \right] =0$
\end{tabular}
\end{center}
\vspace{1em}
for every $u \in \I \oplus \SS,$ and where we keep the original Lie bracket for the elements of $\I \oplus \SS.$ Observe that $\KK= \I \oplus Ky\oplus \SS$ embeds as a Lie algebra in $ \KK'$ with respect to  $y = x' +z'.$



In addition, $\ker{\ad_y} \subseteq \ker d_{s,y}$  (see Remark \ref{rmk: are-derivations}), so $[z', \SS] =0$  and $\SS' : = \SS \oplus Kz'$ is a reductive Lie algebra. Moreover, $R_n(\KK) \oplus Kx'$ is the nilpotent radical of $\KK'$,  $\MM':= \I \oplus Kx'$ is solvable, and, since $d_{s,y}$ is a semisimple operator, the action of $\SS'$ in $\MM'$ is fully reducible. In particular, $\KK' = \MM' \rtimes \SS'.$ In passing, note that 
\begin{equation}
\label{eq: decrease dimension}
\dim_K \MM' = \dim_K\MM \textup{ and }
 \dim_K R_n(\KK') = \dim_K R_n(\KK) +1.
\end{equation}

Levi's Theorem gives us the first of the step of the above-described procedure. Indeed, $\L = R_s(\L) \rtimes \SS$ for a semisimple subalgebra $\SS \leq \L,$ and by virtue of Weyl's Theorem on complete reducibility (see \cite[Chapter III.7, Theorem 8]{Jacobson}), the action of $\SS$ on $R_s(\L)$ is fully reducible. Fix bases $\{x_1, \dots,  x_s\}$ of
$R_n(\L)$ and $\{z_1, \dots, z_t\}$ of $\SS.$ In view of (\ref{eq: decrease dimension}), repeating the previous process eventually we obtain a $K$-Lie algebra 
\begin{equation}
\label{eq: hedadura forma}
\bar{\L} = \bar{\NN} \rtimes \bar{\SS} = \langle x_1, \dots, x_s, x_1', \dots, x_r' \rangle_K \rtimes \langle z_1, \dots, z_t, z_1', \dots, z_r' \rangle_K,
\end{equation}
where $\bar{\NN}$ is a nilpotent ideal and $\bar{\SS} \leq \bar{\L}$ is a reductive subalgebra, and a $K$-basis $\{x_1, \dots, x_s, y_1, \dots, y_r, z_1, \dots, z_t\}$ of $\L$ such that $R_s(\L) = \langle x_1, \dots, x_s, y_1, \dots, y_r \rangle_K$ and $y_i = x_i' + z_i'.$   In particular, $\bar \L$ extends $\L,$ and by construction:
\begin{enumerate}[(N1)]
\item \label{item:1} for all $i, j \in \{1,\dots, r\}$ and $ k \in \{1, \dots, t\}$
$$\left[z_i', z_j'\right] = \left[z_i', z_k\right] =0,$$
and therefore  
$$[x_i', z_k] = [y_i, z_k] \in R_n(\L)$$
(see \cite[Chapter II.7, Theorem 13]{Jacobson});

\item \label{item:2} for all $i \in \{1, \dots, s\}$ and $j, k \in \{1, \dots ,r\},$ by \cite[Chapter III.6, Theorem 7]{Jacobson}, $d_{n,y_j}(R_s(\L)) \subseteq R_n(\L),$ so
$$\left[x_j', x_i \right] = d_{n,y_j}(x_i) \in R_n(\L) \textup{ and } \left[x_j', y_k \right] = d_{n,y_j}(y_k)  \in R_n(\L).$$
In particular, $R_n(\L) \unlhd R_n(\bar \L);$ 

\item \label{item:3}$\dim_K R_n(\bar \L) = s + r = \dim_K R_s(\L).$ \label{N4}
\end{enumerate}
\smallskip
Furthermore, in view of (N\ref{item:1})-(N\ref{item:2}), we have that 
\begin{equation}
\label{eq: derivations}
\left[x_i', \L \right] : =  \left\{\left[x_i', u \right] \mid u \in \L \right\} \subseteq R_n(\L) = \langle x_1, \dots, x_s \rangle_K 
\end{equation}
for all $i \in \{1, \dots, r\}.$
\smallskip
As a consequence, we can prove the following strengthened version of Theorem \ref{thm: embedding PID}:
\smallskip
\begin{theorem}
\label{thm: embedding}
Let $R$ be a PID of characteristic zero and let $\L$ be an $R$-Lie lattice. Then $\L$ embeds into a  splittable $R$-Lie lattice $\bar\L$ such that 
\begin{itemize}
    \item[\textup{(i)}] $R_n(\L) \leq R_n(\bar{\L})$ and 
    \item[\textup{(ii)}] $\rk R_n(\bar{\L}) = \rk R_s(\L).$ 
\end{itemize}
\end{theorem}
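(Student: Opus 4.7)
The plan is to follow Neretin's strategy over the fraction field $K$ of $R$ and then carve out an appropriate $R$-form of the resulting $\bar{\L_K}$.

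First, the nilpotent and solvable radicals $R_n(\L)$ and $R_s(\L)$ are automatically isolated in $\L$: the isolated closure of a nilpotent (resp.\ solvable) ideal remains nilpotent (resp.\ solvable) by torsion-freeness of $\L$, and coincides with the radical by maximality. Consequently I can pick an $R$-basis $\{x_1, \ldots, x_s, y_1, \ldots, y_r, z_1, \ldots, z_t\}$ of $\L$ in which $\{x_1, \ldots, x_s\}$ spans $R_n(\L)$ and $\{x_1, \ldots, x_s, y_1, \ldots, y_r\}$ spans $R_s(\L)$. Tensoring with $K$ and running the iterative elementary-expansion procedure described in the paragraphs preceding (\ref{eq: hedadura forma}), starting from this data and a compatible Levi complement $\SS_K \leq \L_K$, produces a splittable $K$-Lie algebra $\bar{\L_K} = \bar\NN_K \rtimes \bar\SS_K$ with distinguished $K$-basis $\{x_1, \ldots, x_s, x_1', \ldots, x_r', z_1, \ldots, z_t, z_1', \ldots, z_r'\}$ and $y_i = x_i' + z_i'$, satisfying (N\ref{item:1})--(N\ref{item:3}).

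The natural candidate for the $R$-form is $\bar\L := \bar\NN \oplus \bar\SS$ inside $\bar{\L_K}$, where $\bar\NN := \langle x_1, \ldots, x_s, x_1', \ldots, x_r'\rangle_R$ and $\bar\SS := \langle z_1, \ldots, z_t, z_1', \ldots, z_r'\rangle_R$. With these choices the inclusion $\L \subseteq \bar\L$ is automatic: $y_i = x_i' + z_i' \in \bar\L$, and the remaining generators of $\L$ are among the basis elements of $\bar\L$. Closure of $\bar\L$ under the bracket then reduces, via (N\ref{item:1})--(N\ref{item:2}), to the integrality statement that the Jordan--Chevalley parts $d_{n, y_i}, d_{s, y_i} \in \End_K(\bar{\L_K})$ have $R$-integer matrices in the chosen basis.

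This integrality statement is the principal obstacle. The Jordan--Chevalley parts are $K$-polynomials in $\ad_{y_i}$ with coefficients usually outside $R$, so a priori they need not preserve $\bar\L$. My strategy is to exploit that $\End_R(\L)$ is a free $R$-module of finite rank, hence Noetherian: the $R$-subalgebra generated by $\ad_{y_1}, \ldots, \ad_{y_r}$ is a finitely generated $R$-submodule of $\End_R(\L)$, and its scalar extension to $K$ contains every $d_{n, y_i}$ and $d_{s, y_i}$. Consequently there exists $d \in R \setminus \{0\}$ with $d \cdot d_{n, y_i}$ and $d \cdot d_{s, y_i}$ preserving $\L$. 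The construction is then amended either by enlarging $\L$ to a finitely generated $R$-superlattice in $\L_K$ closed under all $d_{n, y_i}, d_{s, y_i}$ (using the nilpotency of $d_{n, y_i}$ and the polynomial relations satisfied by the semisimple parts to keep the enlargement finitely generated), or by running the elementary expansion iteratively over $R$ and clearing denominators step by step, while ensuring compatibility with the identity $y_i = x_i' + z_i'$ that places $\L$ inside $\bar\L$.

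Finally, conditions (i) and (ii) follow readily: $\bar\NN$ is contained in the nilpotent $K$-Lie algebra $\bar\NN_K$ and equals $\bar\L \cap \bar\NN_K$ by the direct-sum structure, so it is a nilpotent ideal of $\bar\L$ containing $R_n(\L) = \langle x_1, \ldots, x_s\rangle_R$, which proves (i); and combining $\bar\NN \subseteq R_n(\bar\L)$ with the inclusion $R_n(\bar\L) \otimes_R K \subseteq R_n(\bar{\L_K}) = \bar\NN_K$ and (N\ref{item:3}) forces $\rk R_n(\bar\L) = \dim_K \bar\NN_K = s + r = \rk R_s(\L)$, which proves (ii).
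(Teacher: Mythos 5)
Your overall strategy---apply Theorem \ref{thm: Neretin} over the fraction field $K$ and then descend to an $R$-form---is the same as the paper's, and you correctly isolate the crux: the Jordan--Chevalley components $d_{n,y_i}, d_{s,y_i}$ are only $K$-linear combinations of powers of $\ad_{y_i}$, so the naive lattice $\langle x_1,\dots,x_s,x_1',\dots,x_r'\rangle_R \oplus \langle z_1,\dots,z_t,z_1',\dots,z_r'\rangle_R$ need not be closed under the bracket. But you do not resolve this obstacle; you only sketch two workarounds, and neither survives scrutiny as stated. Option A (enlarging $\L$ to a finitely generated superlattice closed under all $d_{n,y_i}$ \emph{and} $d_{s,y_i}$) fails for the semisimple parts: a semisimple operator whose eigenvalues lie in $K\setminus R$ (say eigenvalue $\tfrac12$ over $R=\Z$) generates, when applied repeatedly to a lattice, an $R$-module with unbounded denominators, so no finitely generated superlattice closed under it exists in general; ``the polynomial relations satisfied by the semisimple parts'' do not help because the minimal polynomial is not monic over $R$. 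Option B (clearing denominators step by step through the iterated elementary expansions) runs into the constraint $y_i = x_i'+z_i'$: rescaling $z_i'$ to make $d_{s,y_i}$ integral removes $y_i\in\L$ from the resulting lattice, and denominators accumulate across the iteration; you would need to carry this out explicitly, which you have not done.

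The paper's proof closes exactly this gap by a different device, and it is worth seeing why it avoids the semisimple parts altogether. Only the \emph{nilpotent} generators are rescaled: by (N\ref{item:1})--(N\ref{item:2}) one has $[x_i',\L]\subseteq \langle x_1,\dots,x_s\rangle_K$ (equation (\ref{eq: derivations})), so a single $\mu\in R\setminus\{0\}$ makes $\NN=\langle x_1,\dots,x_s,\mu x_1',\dots,\mu x_r'\rangle_R$ a nilpotent Lie lattice with $[\NN_i,\L]\leq\NN_i$ (equation (\ref{eq: are ideals})). The complement is taken to be $\bar\SS$, the \emph{projection of $\L$} onto $\SS_K'$, not the $R$-span of the $z$'s and $z'$'s; since $\bar\SS\subseteq \L+\frac1\lambda\NN$ by (\ref{eq: idazkera}), its adjoint action on the weighted lattice $\bar\NN=\sum_{i=1}^c\frac{1}{\lambda^i}\NN_i$ is controlled entirely by the (integral) action of $\L$ on each $\NN_i$ and the (nilpotent) action of $\NN$, so no integrality of $d_{s,y_i}$ is ever needed. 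Your final paragraph deducing (i) and (ii) from $\bar\NN\leq R_n(\bar\L)$ and $R_n(\bar\L)\otimes_R K\leq R_n(\bar{\L}_K)$ is fine, but it rests on a construction of $\bar\L$ that your argument has not actually produced.
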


\begin{proof}
Let $K$ be the fraction field of $R$ and $\L_{\K} : = \L \otimes_R \K.$  According to Theorem \ref{thm: Neretin}, there exists a finite dimensional splittable $\K$-Lie algebra $\L_{\K}' = R_n(\L_{\K}') \rtimes \SS_{\K}'$ extending $\L_{\K}$ and satisfying conditions (N\ref{item:1})-(N\ref{item:3}). Denote for simplicity $\NN_{\K}' : = R_n(\L_{\K}').$ 

Let $\{x_1, \dots, x_s\}$ be a basis for $R_n(\L)$ as free $R$-module, then $R_n(\L_{\K}) = \langle x_1, \dots, x_s\rangle_K$ and, by (\ref{eq: hedadura forma}), there is a $K$-vector space basis of $\NN_K'$ of the form  
$$\left\{x_1, \dots, x_s, x_1', \dots, x_r'\right\},$$
where $s+r = \dim_K R_s(\L_K) = \rk_R R_s(\L)$ (compare with (N\ref{item:3})).\\
\smallskip

Furthermore, by (N\ref{item:2}), there exists $\mu \in R \setminus \{ 0 \}$ such that 
$$\NN : = \langle x_1, \dots, x_s, \mu x_1', \dots, \mu x_r' \rangle_R$$
is a nilpotent $R$-Lie lattice of rank $s +r$, $R_n(\L) = \langle x_1, \dots, x_s \rangle_R \unlhd \NN$ and
$$[\mu x_j', \L] \subseteq \langle x_1, \dots, x_s \rangle_R,$$
for all $j \in \{1, \dots, r\}$ (using (\ref{eq: derivations}) and that $\L$ is finitely generated for the last condition). In particular,
\begin{equation}
\label{eq: are ideals}
[\NN, \stackrel{(i)}{\dots}, \NN, \L ] \leq [\NN, \stackrel{(i)}{\dots}, \NN ] \ \forall i \in \N.
\end{equation}

\smallskip 

Let $\bar{\SS}$ be the projection of $\L$ into $\SS_{\K}',$ that is,
$$\bar{\SS} = \left\{ \sigma \in \SS_{\K}' \mid \exists \ x \in \L, \exists \ n \in \NN_{\K}' \textup { such that } x = n +\sigma \right\}.$$
Then $\bar{\SS}$ is an $R$-Lie algebra.  Indeed, if $x_1 = n_1 + \sigma_1$ and $ x_2 = n_2 + \sigma_2 \in \L,$ where $n_i \in \NN_{\K}'$ and $\sigma_i \in \SS_{\K}'$ ($i \in \{1,2\}$), then
$$ [x_1, x_2] = [n_1, x_2] + [\sigma_1, n_2] + [\sigma_1, \sigma_2],$$
$[x_1, x_2] \in \L,$ $[n_1, x_2] + [\sigma_1, n_2] \in \NN_{\K}'$ and $[\sigma_1, \sigma_2] \in \SS_{\K}'.$ In addition, since $\L$ is finitely generated, $\bar\SS$ is a free $R$-module of finite rank. \\

Moreover, since $\L$ is a finitely generated $R$-module there exists $\lambda \in R \setminus \{0\}$ such that 
\begin{equation}
\label{eq: idazkera}
\L = \frac{1}{\lambda} \NN \oplus \bar\SS.
\end{equation}

Let $c$ be the nilpotency class of $\NN,$ define $\NN_i := [\NN, \stackrel{(i)}{\dots}, \NN],$ for each $i \in \{1, \dots, c\},$ and
$$\bar{\NN} : = \sum_{i=1}^c \frac{1}{\lambda^i} \NN_i \leq \NN_K',$$
 which is a free $R$-module of rank $s + r = \rk R_s(\L)$.

On the one hand, 
$$\left[ \frac{1}{\lambda^i} \NN_i, \frac{1}{\lambda^j} \NN_j \right] = \frac{1}{\lambda^{i+j}} \left[ \NN_i, \NN_j \right] \leq \frac{1}{\lambda^{i +j}} \NN_{i+j},$$
so $\bar{\NN}$ is a nilpotent $R$-Lie lattice. 

On the other hand, by (\ref{eq: idazkera}) and (\ref{eq: are ideals}), 
\begin{align*} 
\left[ \frac{1}{\lambda^i} \NN_i,  \bar{\SS} \right] & \leq \left[ \frac{1}{\lambda^i} \NN_i,  \L + \frac{1}{\lambda}\NN \right]  \leq \frac{1}{\lambda^i}\left[ \NN_i, \L \right] + \frac{1}{\lambda^{i+1}}\left[\NN_i, \NN \right] \\& \leq \frac{1}{\lambda^i} \NN_i + \frac{1}{\lambda^{i+1}} \NN_{i+1} \leq \bar{\NN}
\end{align*}
for every $i \in \{1, \dots, c\}.$ 

Hence, $\bar{\L} := \bar{\NN} \rtimes \bar{\SS}$ is an $R$-Lie lattice that extends $\L;$ by construction $\bar{\L}$ is splittable, $R_n(\L) \leq \bar{\NN} = R_n(\bar \L)$   and  $\rk R_n(\bar \L) = \rk \bar{\NN} =  s +r = \rk R_s(\L).$  
\end{proof}


\subsection{Ado's Theorem for PIDs}
\label{subsec: Ado}

Finally, we gather all the ingredients:

\begin{proof}[proof of Theorem \ref{thm: Ado}]
Let $\L$ be an $R$-Lie lattice of rank $r$. According to Theorem \ref{thm: embedding}, there exists a splittable $R$-Lie lattice $\bar{\L} = R_n(\bar \L) \rtimes \SS$ extending $\L$ such that $R_n(\L)\leq R_n(\bar \L) $ and $\rk R_n(\bar \L) = \rk R_s(\L).$ By Proposition \ref{prop: splittable}, there exists an $R$-Lie algebra representation $\Phi$ of $\bar{\L}$ which is injective in $R_n(\bar \L)$ and whose degree is bounded by $f(\rk{R_n(\bar{\L})}),$ for $f \colon \N_{\geq 1} \rightarrow \R,$ $r \mapsto \eta \frac{2^r}{\sqrt{r}}.$ 

Therefore $\tilde{\Phi}:=\Phi |_\L \oplus \Ad$ is an $R$-Lie algebra representation of $\L$ that is faithful, as
$$\ker\tilde{\Phi} = \ker{\Phi|_\L} \cap \ker{\Ad} \subseteq \left(\L \setminus R_n(\L)\right) \cap Z(\L) = \{0\}.$$
Thus, since $\rk{R_n\left(\bar{\L} \right)} = \rk{R_s(\L)}$ and $f$ is non-decreasing
\[ \deg{\L} \leq \deg{\tilde{\Phi}} = \deg \Phi + \deg{\Ad} \leq  f(\rk{R_s(\L)}) + r \leq f(r) +r. \qedhere \]
\end{proof}

\begin{remark}
For a $K$-Lie algebra $\L$, with $K$ being a field of characteristic zero,  
Harish-Chandra \cite {HC} improved the original result of Ado by constructing a finite faithful representation $\Psi \colon \L \hookrightarrow \End_K(V)$ with the additional property of been a so-called nil-representation, i.e.  $\Psi(x)$ is a nilpotent endomorphism  for every $x \in R_n(\L).$

Note that the representation $\tilde{\Phi}$ of the preceding proof is also a nil-representation, as both $\Phi$ and $\Ad$ are so.
\end{remark}

\end{document}